\title{A bi-Lipschitz invariant for analytic function germs}
\newtheorem{thm}{Theorem}[section]
\newtheorem{lem}[thm]{Lemma}
\newtheorem{example}[thm]{Example}
\newtheorem{rem}[thm]{Remark}
\numberwithin{equation}{section}
\newcommand{\bb}{\mathbb}
\newcommand{\al}{\mathcal}
\newcommand{\ord}{{\rm ord}}
\newcommand{\Inv}{\mathbf{\rm Inv}}
\newcommand{\grad}{{\rm grad}}
\newcommand{\hot}{{\rm h.o.t}}
\newcommand{\Horn}{{\rm Horn}}
\newcommand{\gr}{{\rm gr}}
\author[Nhan Nguyen]{Nguyen Xuan Viet Nhan}
\address{FPT University, Danang, Vietnam}
\email{nguyenxuanvietnhan@gmail.com}
\subjclass{ 14H15, 32S05, 32S15}
\keywords{analytic function germs, Bi-Lipschitz equivalence, moduli, polar arcs}
\begin{document}

\maketitle
\begin{abstract} 
In this paper, we introduce a new bi-Lipschitz invariant for analytic function germs in two variables, enhancing the Henry-Parusinski invariant. 
\end{abstract}


\normalem 

\section{Introduction}
Let $f, g: (\bb C^2, 0) \to (\bb C, 0)$ be analytic germs. We call $f$ and $g$ \textit{bi-Lipschitz equivalent} if there exists a germ of a bi-Lipschitz homeomorphism $h: (\bb C^2, 0) \to (\bb C^2, 0)$ such that $ f = g \circ h$. This paper focuses on finding bi-Lipschitz invariants for Lipschitz equivalence, a central problem in Lipschitz geometry of singularities. Many significant results have been established in this area;  see, for example, \cite{Trotman}, \cite{HP}, \cite{HP2}, \cite{Fukui}, \cite{PT}, and \cite{nrs}. Among these, the invariant introduced by Henry and Parusiński \cite{HP, HP2} is regarded as the most refined, as it demonstrates that bi-Lipschitz equivalence allows moduli. A typical example is the family of function germs defined by
$$
f_t(x, y) = x^3 -3t^2xy^4 + y^6.
$$
Using Henry-Parusi\'nski’s invariant, one can deduce that for distinct values $t$ and $t'$ (with $t$ close to $t'$) the function germs $f_t$ and $f_{t'}$ belong to different bi-Lipschitz equivalence classes. This invariant is a powerful tool, however, as shown in \cite{PT}, it is not sufficient to fully characterize the bi-Lipschitz type.

In \cite{FR}, Fernandes and Ruas studied the following family of germs:
$$
g_t(x, y) = x^3 - 3t^2xy^{10} + y^{12}.
$$
They proved that Henry-Parusi\'nski's invariant is constant  while this family is not strongly bi-Lipschitz equivalent, meaning there is no Lipschitz vector field that trivializes $g_t$. An interesting question is whether this family is bi-Lipschitz trivial.

In this paper, we introduce an improved version of Henry–Parusi\'nski's invariant that can effectively distinguish the bi-Lipschitz  equivalence classes of $g_t$ (see Example \ref{EX1}). Our approach builds upon the recent work of P\u{a}unescu and Tib\u{a}r \cite{PT} on gradient canyons under bi-Lipschitz equivalence (see also \cite{PT2}). This invariant also uncovers germs with Lipschitz modality $\geq 2$ (Example 3.2), providing a significant step toward a comprehensive classification of Lipschitz unimodal germs.

\textbf{Notation and Conventions:} We use $|\cdot|$ to denote the Euclidean norm. To compare the asymptotic behavior of functions $\varphi$ and $\psi$ near 0, we employ the standard notations $\varphi = o(\psi)$ and $\varphi = O(\psi)$. The notation $\varphi \sim \psi$ means $\varphi = O(\psi)$ and $\psi = O(\varphi)$. We define $\mathbb{C}^* = \mathbb{C} \setminus {0}$. For a function of the form $a_1 y^{h_1} + \dots + a_k y^{h_k} + o(y^{h_k})$, where the $h_i$ are rational, we often write $a_1 y^{h_1} + \dots + a_k y^{h_k} + \text{h.o.t.}$ (higher-order terms).

\section{Main result}

\subsection{P\u{a}unescu--Tib\u{a}r's results}
In this section, we recall some results due to P\u{a}unescu and Tib\u{a}r in \cite{PT}, which will be used in the next section.

By an analytic arc we mean a convergent power series of the form
$$x = \alpha (y) = c_1 y^{n_1/N} + c_2 y^{n_2/N} + \cdots, c_i \in \bb C$$
where $N \leq n_1 < n_2 < \ldots$ are positive integers having no common divisor.  There are $N$ conjugates of $\alpha$ that are of the following forms: 

$$\alpha_{conj}^{(j)} = \sum c_i \theta^{jn_i} y ^{n_i /N}, \hspace{0.5cm}$$
where $\theta$ is an $N$-th root of the unity, $j = 0, \ldots, N-1$.  

We call $n_1/N$ the order of $\alpha$, denoted by $\ord_y(\alpha) = n_1/N$.

 We denote by $\alpha_*$ the image of the map $t \mapsto (\alpha(t^N), t^N)$.  Note that $\alpha_* = \alpha_{conj\: *}^{(j)}$ for every $0\leq j\leq N-1$.

Given two analytic arcs $\alpha$ and $\beta$, the contact order between $\alpha$ and $\beta$, and between $\alpha_*$ and $\beta_*$  are defined by
\begin{itemize}
\item $\ord_0(\alpha, \beta) =  \ord_y(\alpha(y) - \beta(y))$
\item $\ord_0(\alpha_*, \beta_*) = \max \ord_y (\tilde{\alpha}(y) -  \tilde{\beta}(y))$
where the maximum is taken over all conjugates $\tilde{\alpha}$  of $\alpha$ and $\tilde{\beta}$ of $\beta$. 
\end{itemize}
It is obvious that $\ord_0(\alpha, \beta) \leq \ord_0(\alpha_*, \beta_*)$.

Let $f: (\bb C^2,0) \to (\bb C,0)$ be a germ of analytic function with Taylor expansion
$$ f(x,y) = H_k(x, y) + H_{k + 1}(x, y) + \ldots$$
where $H_i$ is a homogeneous polynomial of degree $i$. We assume that $f$ is \textit{a regular} in $x$, meaning $H_k(1, 0) \neq 0$. We also assume that $f$ does not have multiple root.

 A \textit{polar arc} of $f$ is an analytic arc $x = \alpha(y)$ such that  $\frac{\partial f}{\partial x}  (\alpha(y), y) = 0$. A conjugate of $\alpha$ is considered as  
a distinct polar arc from $\alpha$ if $\theta \neq 1$. 

For a polar arc $\alpha$, the gradient degree $d_{\gr}(\alpha)$ is defined as follows:

\begin{itemize}
    \item If $f(\alpha(y), y) \not\equiv 0$ then  $d_{\gr}(\alpha)$ is the smallest number $q$ such that
 $$\ord_y (\|\grad f(\alpha(y), y)\|) = \ord_y (\|\grad f(\alpha(y) + c y^q, y)\|),$$
for a generic $c \in \bb C$. 
\item If $f(\alpha(y), y) \equiv 0$, we set $d_{\gr}(\alpha) = +\infty$.
\end{itemize}
It is straightforward to verify that the gradient degree is invariant under conjugation, i.e., $d_{\gr}(\alpha) = d_{\gr} (\alpha_{conj}^{(j)})$ for all $j = 0, \ldots, N-1$. 

In \cite[Section 3.2]{PT} a method was introduced to compute $d_{\gr}(\alpha)$ using the Newton polygon.





\begin{example}\rm 
Let $f_t(x, y) = x^3 - 3t^2 x y^{10} + y^{12}$. 

Then $$\frac{\partial f_t}{ \partial x} = 3 x^2 - 3t^2 y^{10}$$ $$\frac{\partial f_t}{\partial y} =  - 30t^2x y^9 + 12y^{11}$$
There are two polar arcs $ x =\alpha(y) =  ty^5$ and $ x =\beta(y)=  -ty^5$. From definition, it is easy to check that $$ d_{\gr} (\alpha) =  d_{\gr} (\beta) = 11/2.$$ 
\end{example}

Let $\alpha$ be a polar arc of $f$.  The \textit{gradient canyon} of $\alpha_*$ is the set
$$
\al{GC}(\alpha_*) = \{ \beta_* : \beta(y) = \alpha(y) + c y^{d_{\gr}(\alpha)} + \hot ,\hspace{0.3cm} c \in \mathbb{C} \},
$$
and  $d_{\gr}(\alpha_*): = d_{\gr}(\alpha)$  is called \textit{the degree of the canyon}.

We say that $\alpha$ is tangential if $\alpha_*$ is tangent to the singular locus $\Sigma_f$ of the tangent cone of the zero locus of $f$, which occurs if and only if  $d_{\gr}(\alpha) > 1$.
Note that 
$$\Sigma_f = \{(x,y) \in \bb C^2: \frac{\partial H_k}{\partial x} =\frac{\partial H_k}{ \partial y} = 0\}.$$
Moreover, if $d_{\gr}(\alpha)>1$ then $\al{GC}(\alpha_*)$ is \textit{minimal}, meaning that for any polar arc $\gamma$,   if $\al {GC}(\gamma_{*})\subset \al{GC}(\alpha_*)$ then  $\al{GC}(\gamma_{*}) =  \al{GC}(\alpha_*)$ (see \cite[Theorem 2.4]{PT}).

For an analytic arc $\gamma_*$, and positive real numbers $e \geq 1$, $\rho$, and $\epsilon \geq 0$, we define 
$$\Horn^{(e)} (\gamma_*, \rho, \epsilon )  = \bigcup_{\beta} \{\beta_*\cap B(0,\epsilon) | \beta(y) = J^e(\gamma) (y) + c y^e, |c|\leq \rho\}$$
where $J^e(\gamma)$ is the truncation of $\gamma$ up to power $e$.

Set
 $$ \al D^{(e)}_{\gamma_*, \rho} (\lambda, \epsilon) = \{f= \lambda\}\cap \Horn^{(e)} (\gamma_*, \rho, \epsilon ),$$
for $0< \lambda \ll \epsilon$.

\begin{thm}[{\cite[Section 5.2]{PT}}]\label{thm_pt}
Let $f, g: (\mathbb{C}^2, 0) \rightarrow (\mathbb{C}, 0)$ be analytic function germs that are mini-regular in $x$ and have no multiple roots. Suppose that there exists a bi-Lipschitz homeomorphic germ $\varphi: (\mathbb{C}^2, 0) \rightarrow (\mathbb{C}^2, 0)$ such that $f = g \circ \varphi$. Let $\alpha$ be a polar arc of $f$ with $d = d_{\gr}(\alpha) > 1$. Then, there exists a polar arc $\alpha'$ of $g$ satisfying the following conditions:
\begin{enumerate}
    \item  $d_{\gr}(\alpha') = d_{\gr}(\alpha) = d$

\item Moreover, given $\epsilon$ sufficiently small and $\rho>0$, there are $0<\rho_1 < \rho_2$  such that
$$ \al D^{(d)}_{\alpha'_*, \rho_1} (\lambda, \epsilon) \subset \varphi\left(\al D^{(d)}_{\alpha_*, \rho} (\lambda, \epsilon)\right) \subset \al D^{(d)}_{\alpha'_*, \rho_2} (\lambda, \epsilon)  $$
for all $0 <\lambda \ll \epsilon$.
\end{enumerate}
\end{thm}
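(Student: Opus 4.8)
The plan is to exploit the identity $f=g\circ\varphi$, which forces $\varphi$ to carry the Milnor fibre $X^f_\lambda:=\{f=\lambda\}\cap B(0,\epsilon)$ onto the fibre $X^g_\lambda:=\{g=\lambda\}\cap\varphi(B(0,\epsilon))$ for every small $\lambda\neq0$, and then to follow the metric features of these fibres through $\varphi$. The only properties of $\varphi$ needed are the bi-Lipschitz estimates $|p-q|\asymp|\varphi(p)-\varphi(q)|$, hence in particular $|p|\asymp|\varphi(p)|$. From \cite{PT} I take the fine description of the part of the Milnor fibre inside a gradient canyon: when $d_{\gr}(\alpha)=d>1$, the canyon $\al{GC}(\alpha_*)$ is minimal, the set $\al D^{(d)}_{\alpha_*,\rho}(\lambda,\epsilon)$ is a connected piece of $X^f_\lambda$ contained in an exponent-$d$ horn about $\alpha_*$ which — once $\lambda\ll\epsilon$ — genuinely fills that scale, in the sense that it contains points $p,q$ with $|p|\asymp|q|\asymp r$ and $|p-q|\asymp r^d$ (so it lies in no exponent-$(d+\delta)$ horn about $\alpha_*$, $\delta>0$), and the pieces of $X^g_\lambda$ attached to distinct minimal canyons of $g$ are metrically separated at scale $r^d$. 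The same statements hold for $g$ and, via $\varphi^{-1}$ (which satisfies $g=f\circ\varphi^{-1}$), with $f$ and $g$ interchanged.

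First I would locate $\alpha'$. Fix $p_\lambda\in\alpha_*\cap X^f_\lambda$. Since $\alpha$ is tangential, $p_\lambda$ lies in the thin part of $X^f_\lambda$, so using $|p|\asymp|\varphi(p)|$ the point $\varphi(p_\lambda)\in X^g_\lambda$ lies in a piece of $X^g_\lambda$ whose transverse size at Euclidean distance $r$ from $0$ is $o(r)$; by the canyon description for $g$, such a piece is contained in the canyon piece of a tangential polar arc of $g$, which I take to be $\alpha'$. A connectedness-in-$\lambda$ argument — $\lambda\mapsto\varphi(p_\lambda)$ is continuous, the canyon pieces vary continuously — shows $\alpha'$ can be chosen independent of $\lambda$, that $\varphi\big(\al D^{(d)}_{\alpha_*,\rho}(\lambda,\epsilon)\big)$ lies in the canyon piece of $\alpha'$ for all small $\lambda$, and that $\varphi(\alpha_*)$ has contact order $\geq d$ with $\alpha'_*$.

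Next I would show $d_{\gr}(\alpha')=d$. Put $d':=d_{\gr}(\alpha')$. The set $\varphi\big(\al D^{(d)}_{\alpha_*,\rho}(\lambda,\epsilon)\big)$ inherits from its preimage the property of filling scale $r^d$, yet lies inside the canyon piece of $\alpha'$, which sits in an exponent-$d'$ horn about $\alpha'_*$; since an exponent-$d'$ horn about $\alpha'_*$ is contained in an exponent-$(d+\delta)$ horn about $\alpha'_*$ whenever $0<\delta<d'-d$, this rules out $d'>d$, so $d'\leq d$. Running the construction of the previous step for $\varphi^{-1}$ and $\alpha'$ produces a tangential polar arc $\tilde\alpha$ of $f$ with $d_{\gr}(\tilde\alpha)\leq d'$ whose canyon piece contains $\al D^{(d)}_{\alpha_*,\rho}(\lambda,\epsilon)$ for all small $\lambda$; letting $\lambda\to0$ this gives $\al{GC}(\alpha_*)\subseteq\al{GC}(\tilde\alpha_*)$, and minimality of $\al{GC}(\tilde\alpha_*)$ forces equality, whence $d=d_{\gr}(\tilde\alpha)\leq d'$. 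Thus $d'=d$, which is (1). Given this, (2) is controlled bookkeeping: $\al D^{(d)}_{\alpha_*,\rho}(\lambda,\epsilon)$ lies in an exponent-$d$ horn of radius $\rho$ about $\alpha_*$, so its $\varphi$-image has transverse size $\leq L\rho\,r^d$ at distance $r$ and is centered along $\alpha'_*$ to order $d$, hence lies in $\Horn^{(d)}(\alpha'_*,\rho_2,\epsilon)$ with $\rho_2=CL\rho$; intersecting with $X^g_\lambda$ yields the right-hand inclusion, and the same argument for $\varphi^{-1}$ with $\rho_1>0$ small enough yields $\al D^{(d)}_{\alpha'_*,\rho_1}(\lambda,\epsilon)\subseteq\varphi\big(\al D^{(d)}_{\alpha_*,\rho}(\lambda,\epsilon)\big)$.

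I expect the main obstacle to be the first step and the ``letting $\lambda\to0$'' passage in the second. Because $\varphi$ is only bi-Lipschitz, hence a priori not subanalytic, $\varphi(\alpha_*)$ need not be the germ of an analytic set and one cannot simply take a limit arc; the argument must be run on the Milnor fibres $X^f_\lambda,X^g_\lambda$ uniformly in $\lambda$, using the precise metric structure inside gradient canyons — the exact scale $r^d$ and the separation between pieces attached to different minimal canyons — to guarantee that the thin set $\varphi\big(\al D^{(d)}_{\alpha_*,\rho}(\lambda,\epsilon)\big)$ neither fragments among several canyons of $g$ nor escapes the thin part, and to pin down a single analytic polar arc $\alpha'$. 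Everything else reduces to estimates with bi-Lipschitz constants and horn radii.
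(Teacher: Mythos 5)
This statement is not proved in the paper at all: it is quoted verbatim from P\u{a}unescu--Tib\u{a}r \cite[Section 5.2]{PT}, so there is no internal proof to compare your argument with; the author simply imports it. Judged on its own, your text is an outline rather than a proof, and the gap is exactly where you suspect it is. Every step that does real work is delegated to unproved structural assertions about the Milnor fibre inside canyons: that $\al D^{(d)}_{\alpha_*,\rho}(\lambda,\epsilon)$ ``genuinely fills'' the scale $r^d$; that pieces of the fibre attached to distinct minimal canyons are metrically separated at that scale; and, most critically, that a ``thin'' piece of $\{g=\lambda\}$ containing $\varphi(p_\lambda)$ must lie inside the canyon piece of a single tangential polar arc of $g$, uniformly in $\lambda$, so that one analytic arc $\alpha'$ can be extracted. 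These facts are not horn bookkeeping; in \cite{PT} they are the main content of the argument and are established via concentration of curvature on the Milnor fibre (Langevin-type integral localization) together with the clustering of polar arcs and the minimality of canyons of degree $>1$. Assuming them makes the proposal essentially circular: you are using the conclusion-level description of how bi-Lipschitz maps interact with canyon pieces in order to derive the theorem that encodes that description.

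Two more concrete soft spots. First, in the step ruling out $d'>d$, having two points $p,q$ in the image with $|p|\asymp|q|\asymp r$ and $|p-q|\asymp r^d$ does not by itself contradict containment in a horn of exponent $d+\delta$: the horn only constrains the transverse distance at equal $y$, and two points of the fibre can be $r^d$ apart simply because their $y$-coordinates differ by $\asymp r^d$; you need the separation statement at fixed height, which again is part of the fine fibre structure you are assuming. Second, in part (2) the left-hand inclusion for an \emph{arbitrary} prescribed $\rho>0$ requires controlling the contact between $\varphi(\alpha_*)$ and $\alpha'_*$ at order $d$ quantitatively (otherwise $\varphi^{-1}\bigl(\al D^{(d)}_{\alpha'_*,\rho_1}\bigr)$ lands in a horn of radius bounded below by that contact coefficient, which need not be $\le\rho$); your sketch does not address this. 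Since the theorem is a cited result, the honest options are either to cite \cite{PT} as the paper does, or to reproduce the curvature-concentration argument, which your outline currently replaces by its conclusions.
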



\subsection{New invariant} Let us begin by explaining how we arrive at the new invariant. The formal statement will be presented at the end of the section.

Let $f, g: (\bb C^2, 0) \to (\bb C,0)$ be given as in Theorem \ref{thm_pt}. Let 
 $\alpha$ and $\beta$ polar arcs of $f$ satisfying 
 \begin{enumerate}
     \item $\ord_0(\alpha, \beta) >1$
     \item $\al{GC}(\alpha_{*}) \neq  \al{GC}(\beta_*)$ 
 \end{enumerate}
 The condition (1) implies that  $\alpha$ and $\beta$ are tangential, so there is a line $L\subset \Sigma_f$ such that $\alpha_*$ and $\beta_*$ are both tangent to $L$.

 We write
\begin{equation}\label{eq11}
    f(\alpha(y), y) = a_0 y^{h_0} + a_1 y^{h_1} + \dots + O(y^{h_0 + d  -1})
\end{equation}
and

\begin{equation}\label{eq12}
    f(\beta(y), y) = b_0 y^{l_0} + b_1 y^{l_1} + \dots + O(y^{l_0 + d' -1})
\end{equation}
where  $a_0, b_0 \neq 0$, $d = d_{\gr}(\alpha)$ and $d' = d_{\gr}(\beta)$.

  \begin{rem}\label{rem1}\rm
   If $f(\alpha(y), y) = a y^h + \hot$, then for any $\gamma = \alpha(y) + t y^{m} + \hot$ with $m > \ord_y(\alpha)$, we have  
   $$
   f(\gamma(y), y) = ay^h + \dots + u(t) y^{h+m-1} + \hot,  
   $$
   where $u(t)$ is the first term that depends on $t$. The expansion of $f(\gamma(y), y)$ can be written as 
   $$
   f(\gamma(y), y) = ay^h + \dots + O(y^{h+m-1}).
   $$
   Note the latter expression is independent of perturbations of $\alpha$ of order $m$. 
\end{rem}

It follows from Theorem \ref{thm_pt} that there are polar arcs $\alpha'$ and $\beta'$ of $g$  such that
\begin{itemize}
    \item[(i)] $d_{\gr}(\alpha') =  d_{\gr}(\alpha)$,  $d_{\gr}(\beta') =  d_{\gr}(\beta)$

\item[(ii)] $\ord_0(\alpha_*, \beta_*) = \ord(\alpha'_*, \beta_*')$

\item[(iii)] $\varphi(\alpha_*) \subset  \Horn^{(d)} (\alpha'_*, \rho, \epsilon )$ and $\varphi(\beta_*) \subset  \Horn^{(d')} (\alpha'_*, \rho, \epsilon )$ as germs at the origin for some $\rho>0$ and $\epsilon>0$.
\end{itemize}
We may assume that 
\begin{equation}\label{equ12.1}
    g(\alpha'(y), y) = a'_0 y^{h'_0} + a'_1 y^{h'_1} + \ldots + O(y^{h'_0 + d  -1})
\end{equation}
and

\begin{equation}\label{equ12.2}
    g(\beta'(y), y) = b'_0 y^{l'_0} + b'_1 y^{l'_1} + \ldots + O(y^{l'_0 + d' -1})
\end{equation}
where $a'_0, b'_0 \neq 0$. 

Consider a sequence $(y_n)\subset \bb C$ with  $y_n\to 0$ as $n \to \infty$. 
From (iii), passing a subsequence if necessary,  we can write 
\begin{equation}\label{eq13}
    \varphi (\alpha(y_n), y_n) = ( \alpha' (Y_n)+ O (Y_n^d) , Y_n)
\end{equation}
and 

\begin{equation}\label{eq14}
    \varphi (\beta(y_n), y_n) = ( \beta' (\tilde{Y}_n)+  O(\tilde{Y}_n^{d'}), \tilde{Y}_n)
\end{equation}
where $(Y_n)$  and $(\tilde{Y}_n)$ are  sequences in a local coordinate. 
From Remark \ref{rem1}, it yields that 
\begin{equation}\label{eq15}
    (g\circ\varphi) (\alpha(y_n), y_n) = a'_0 Y_n^{h'_0} +  a'_1 Y_n^{h'_1} + \dots + O(Y_n^{h'_0 + d -1})
\end{equation}
and 
\begin{equation}\label{eq16}
    (g\circ\varphi) (\beta(y_n), y_n) = b'_0 \tilde{Y}_n^{l'_0} +  b'_1 \tilde{Y}_n^{l'_1} + \dots + O(\tilde{Y}_n^{l'_0 + d' -1}).
\end{equation} 

Observe that in a neighbourhood of the origin $|y_n| \sim |(\alpha(y_n), y_n)| \sim |\varphi((\alpha(y_n), y_n)| \sim |Y_n|$ and 
   $|y_n| \sim |(\beta(y_n), y_n)| \sim |\varphi((\beta(y_n), y_n)| \sim |\tilde{Y}_n|$.
   Therefore, 
   $$ |y_n| \sim |Y_n| \sim |\tilde{Y}_n|.$$
By $f = g \circ \varphi$, we have 
\begin{equation}\label{eq17}
\begin{cases}
a_0 y_n^{h_0} + a_1 y_n^{h_1} + \ldots + O(y_n^{h_0 + d  -1}) = a'_0 Y_n^{h'_0} +  a'_1 Y_n^{h'_1} + \dots + O(Y_n^{h'_0 + d -1}),\\

b_0 y_n^{l_0} + b_1 y_n^{h'_1} + \ldots + O(y_n^{l_0 + d' -1}) = b'_0 Y_2^{l'_0} +  a'_1 \tilde{Y}_n^{l'_1} + \dots + O(\tilde{Y}_n^{l'_0 + d' -1})
\end{cases}
\end{equation}
Since $|y_n| \sim |Y_n| \sim |\tilde{Y}_n|$, it yields that $ h_0 = h'_0$ and $l_0 = l'_0$. We rewrite \eqref{eq17} as follows: 

\begin{equation}\label{eq17.1}
\begin{cases}
a_0 y_n^{h_0} + a_1 y_n^{h_1} + \ldots + O(y_n^{h_0 + d  -1}) = a'_0 Y_n^{h_0} +  a'_1 Y_n^{h'_1} + \dots + O(Y_n^{h_0 + d -1}),\\

b_0 y_n^{l_0} + b_1 y_n^{h'_1} + \ldots + O(y_n^{l_0 + d' -1}) = b'_0 \tilde{Y}_n^{l_0} +  a'_1 \tilde{Y}_n^{l'_1} + \dots + O(\tilde{Y}_n^{l_0 + d' -1})
\end{cases}
\end{equation}
This implies that   
\begin{equation}\label{eq18}
\begin{cases}
\lim_{n\to \infty} \left(\frac{y_n}{Y_n}\right)^{h_0} = \frac{a'_0}{a_0} \\

\lim_{y\to 0} \left(\frac{y}{\tilde{Y}}\right)^{l_0} = \frac{b'_0}{b_0}.
\end{cases}
\end{equation}

Since $\varphi$ is bi-Lipschitz,
\begin{equation}\label{eq18(0)}
    |\varphi (\alpha(y), y) - \varphi (\beta(y), y))| \sim |(\alpha(y),y) - (\beta(y),y)| = |\alpha(y) - \beta(y)| \sim |y|^{\ord_0(\alpha, \beta)}.
\end{equation}

Note that $\ord_0(\alpha, \beta) >1$, yielding that  

$$|\varphi (\alpha(y_n), y_n) - \varphi (\beta(y_n), y_n))| \sim |y_n|^{\ord_0(\alpha, \beta)}= o(y_n).$$

It follows from \eqref{eq13} and \eqref{eq14}, $|Y_n -\tilde{Y}_n| = o (y_n) = o(Y_n)$. Alternatively, we write
\begin{equation}\label{eq18(1)}
    \tilde{Y}_n = Y_n + o(Y_n).
\end{equation}
Passing a subsequence, we may assume that $\lim_{n\to \infty} \frac{y_n}{Y_n}$ and $ \lim_{n\to \infty} \frac{y_n}{\tilde{Y}_n}$ exist. Then, by \eqref{eq18(1)} there must be an $c \in \bb C$ such that
$c = \lim_{n\to \infty} \frac{y_n}{Y_n} = \lim_{n\to \infty} \frac{y_n}{\tilde{Y}_n}$. Hence, from \eqref{eq18} we have that 
\begin{equation}\label{eq18.2}
   c^{h_0} =  \frac{a_0'}{a_0}  \text{ and }  c^{l_0}= \frac{b_0'}{b_0}.
\end{equation}

Indeed, $c$ is the constant appearing in the Henry-Parusi\'nski invariant \cite{HP}. It is solely determined by the tangent line $L$ and is independent of the specific choices of $\alpha, \alpha', \beta, \beta'$.

Let us now consider the case that $h_0 = l_0$. Then, 
$$
\frac{a'_0}{a_0} = \frac{b'_0}{b_0}.
$$

Set $\delta = \ord(\alpha, \beta)$. Note that $\delta >1$ and   $\delta \leq \ord(\alpha_*, \beta_*)$. As in \eqref{eq18(0)},
$$|\varphi(\alpha(y), y) - \varphi (\beta(y), y)|  \sim  |y|^\delta.$$
By definition,  $|Y_n - \tilde{Y}_n| \leq|\varphi(\alpha(y_n), y_n) - \varphi (\beta(y_n), y_n)| \sim |y_n|^\delta$. Thus, 
$$|Y_n - \tilde{Y}_n| = O(y_n^\delta) = O(Y_n^\delta).$$ 
We then write
$$\tilde{Y}_n = Y_n + O(Y_n^\delta).$$ 
Since $\al{GC}(\alpha_{*}) \neq  \al{GC}(\beta_*)$, so  $\delta < \min\{ d, d'\}$.  It follows that  
\begin{equation}
    b'_0 \tilde{Y}_n^{l_0} +  a'_1 \tilde{Y}_n^{l'_1} + \dots + O(\tilde{Y}_n^{l_0 + d' -1}) = b'_0 Y_n^{h_0} +  a'_1 Y_n^{l'_1} + \dots + O(Y_n^{h_0 + \delta -1})
\end{equation}
Combining with \ref{eq17.1}, we obtain
\begin{equation}\label{eq19}
\begin{cases}
a_0 y_n^{h_0} + a_1 y_n^{h_1} + \ldots + O(y_n^{h_0 + \delta  -1}) = a'_0 Y_n^{h_0} +  a'_1 Y_n^{h'_1} + \dots + O(Y_n^{h_0 + \delta-1}),\\

b_0 y_n^{h_0} + b_1 y_n^{h'_1} + \ldots + O(y_n^{h_0 + \delta -1}) = b'_0 Y_n^{h_0} +  b'_1 Y_n^{l'_1} + \dots + O(Y_n^{h_0 + \delta -1})
\end{cases}
\end{equation}
Equivalently, 
\begin{equation}\label{eq19(0)}
\begin{cases}
 y_n^{h_0} + \frac{a_1}{a_0} y_n^{h_1} + \ldots + O(y_n^{h_0 + \delta  -1}) = \frac{a'_0}{a_0} Y_n^{h_0} +  \frac{a'_1}{a_0} Y_n^{h'_1} + \dots + O(Y_n^{h_0 + \delta-1}),\\

y_n^{h_0} + \frac{b_1}{b_0} y_n^{h'_1} + \ldots + O(y_n^{h_0 + \delta -1}) = \frac{b'_0}{b_0} Y_n^{h_0} +  \frac{b'_1}{b_0} Y_n^{l'_1} + \dots + O(Y_n^{h_0 + \delta -1})
\end{cases}
\end{equation}
By $
\frac{a'_0}{a_0} = \frac{b'_0}{b_0}$, we have 
\begin{equation}\label{eq19}
\begin{cases}
 y_n^{h_0} + \frac{a_1}{a_0} y_n^{h_1} + \ldots + O(y_n^{h_0 + \delta  -1}) = \frac{a'_0}{a_0} Y_n^{h_0} +  \frac{a'_1}{a_0} Y_n^{h'_1} + \dots + O(Y_n^{h_0 + \delta-1}),\\
y_n^{h_0} + \frac{b_1}{b_0} y_n^{h'_1} + \ldots + O(y_n^{h_0 + \delta -1}) = \frac{a'_0}{a_0} Y_n^{h_0} +  \frac{b'_1}{b_0} Y_n^{l'_1} + \dots + O(Y_n^{h_0 + \delta -1})
\end{cases}
\end{equation}

Now, define \( \tilde{h} \) as follows:  
\[
\tilde{h} =
\begin{cases} 
\min\left\{ h_i \mid \frac{a_i}{a_0} \neq \frac{b_i}{b_0} \right\}, & \text{if such an index } i \text{ exists}; \\
h_0 + \delta - 1, & \text{otherwise}.
\end{cases}
\]
Similarly, define $\tilde{h}' $ as  
$$
\tilde{h}' =
\begin{cases} 
\min\left\{ h'_i \mid \frac{a'_i}{a_0} \neq \frac{b'_i}{b_0} \right\}, & \text{if such an index } i \text{ exists}; \\
h_0 + \delta - 1, & \text{otherwise}.
\end{cases}
$$

\begin{lem}\label{lem1}
    $\tilde{h} = \tilde{h}'$. 
\end{lem}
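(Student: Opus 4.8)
The plan is to subtract the two equations in \eqref{eq19} from one another and then compare orders of vanishing along the sequence $(y_n)$. First I would fix common exponent sets below the threshold $h_0 + \delta - 1$: since $\delta < \min\{d, d'\}$, the terms displayed in \eqref{eq11}, \eqref{eq12}, \eqref{equ12.1} and \eqref{equ12.2} up to that threshold are genuine, so I may write
$$f(\alpha(y),y) = \sum_i a_i y^{h_i} + O(y^{h_0 + \delta - 1}), \qquad f(\beta(y),y) = \sum_i b_i y^{h_i} + O(y^{h_0 + \delta - 1}),$$
$$g(\alpha'(y),y) = \sum_j a'_j y^{h'_j} + O(y^{h_0 + \delta - 1}), \qquad g(\beta'(y),y) = \sum_j b'_j y^{h'_j} + O(y^{h_0 + \delta - 1}),$$
where each pair of sums ranges over one common (finite) set of exponents smaller than $h_0 + \delta - 1$, with zero coefficients inserted where necessary. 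Recall that $h'_0 = h_0$ and, in the case under consideration, $a'_0/a_0 = b'_0/b_0$; this is exactly why the index $0$ never occurs in the definition of $\tilde h$ or $\tilde h'$.

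Subtracting the second equation of \eqref{eq19} from the first and writing $D_f(y) := \sum_i (a_i/a_0 - b_i/b_0)\, y^{h_i}$ and $E_g(Y) := \sum_j (a'_j/a_0 - b'_j/b_0)\, Y^{h'_j}$, I obtain along the sequence
$$D_f(y_n) + O(y_n^{h_0 + \delta - 1}) = E_g(Y_n) + O(Y_n^{h_0 + \delta - 1}).$$
By the definition of $\tilde h$: if $\tilde h < h_0 + \delta - 1$ then $D_f(y) = c_f\, y^{\tilde h} + (\text{terms of higher order})$ with $c_f \neq 0$, and since $h_0 + \delta - 1 > \tilde h$ the left-hand side above is then $\sim |y_n|^{\tilde h}$; if instead $\tilde h = h_0 + \delta - 1$ then $D_f \equiv 0$ and the left-hand side is $O(y_n^{h_0 + \delta - 1})$. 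The same dichotomy governs $E_g$ and $\tilde h'$ on the right. Invoking $|y_n| \sim |Y_n|$ (in fact $y_n = cY_n(1+o(1))$ with $c \neq 0$), I compare the three cases: if $\tilde h, \tilde h' < h_0 + \delta - 1$ then $|Y_n|^{\tilde h} \sim |Y_n|^{\tilde h'}$, forcing $\tilde h = \tilde h'$; if, say, $\tilde h < h_0 + \delta - 1 = \tilde h'$ then the left-hand side is $\sim |Y_n|^{\tilde h}$ while the right-hand side is $O(|Y_n|^{h_0+\delta-1}) = o(|Y_n|^{\tilde h})$, which is impossible, and symmetrically for the case $\tilde h' < h_0 + \delta - 1 = \tilde h$; and if $\tilde h = \tilde h' = h_0 + \delta - 1$ there is nothing to prove. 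Hence $\tilde h = \tilde h'$ in all cases.

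The computations I would suppress as routine are the passage from $(g\circ\varphi)(\alpha(y_n),y_n)$ to $g(\alpha'(Y_n),Y_n)$ modulo $O(Y_n^{h_0+d-1})$, and the check that replacing $\tilde Y_n$ by $Y_n + O(Y_n^\delta)$ in the expansion of $g(\beta'(\tilde Y_n),\tilde Y_n)$ leaves every term of exponent below $h_0 + \delta - 1$ unchanged (this uses $\delta > 1$ together with the fact that all exponents there exceed $h_0$) --- both are already embodied in \eqref{eq15}, \eqref{eq16} and \eqref{eq19}. The one point that needs genuine care is the bookkeeping of the remainders: one must keep strict and non-strict inequalities apart so that the leading term of $D_f$ (resp.\ $E_g$) truly dominates the $O(y_n^{h_0+\delta-1})$ (resp.\ $O(Y_n^{h_0+\delta-1})$) error, and this is precisely where the hypothesis $\al{GC}(\alpha_*) \neq \al{GC}(\beta_*)$, i.e.\ $\delta < \min\{d,d'\}$, together with $h_0$ being the common least exponent, is used.
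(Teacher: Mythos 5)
Your proposal is correct and follows essentially the same route as the paper: subtract the two equations in \eqref{eq19} side by side, observe that the first exponent at which the normalized coefficients of $f(\alpha(y),y)/a_0$ and $f(\beta(y),y)/b_0$ (resp.\ of the right-hand sides) disagree produces the leading term of the difference, and use $|y_n|\sim|Y_n|$ to rule out the mixed case and force $\tilde h=\tilde h'$ when both lie below $h_0+\delta-1$. Your explicit bookkeeping via $D_f$ and $E_g$ is just a tidier packaging of the paper's case-by-case subtraction.
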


\begin{proof} 
If $\tilde{h} <h_0 + \delta -1$ and $ \tilde{h}' =h_0 + \delta -1$, we can rewrite equations in \ref{eq19} as the following forms
\begin{equation}\label{eq22}
    A + \tilde{a} y_n^{\Tilde{h}} + \hot  = \frac{a'_0}{a_0}B+ O(Y_n^{h_0 + \delta - 1})
\end{equation}
and 
\begin{equation}\label{eq23}
    A + \tilde{b} y_n^{\Tilde{h}} + \hot  = \frac{a'_0}{a_0}B+ O(Y_n^{h_0 + \delta - 1}) 
\end{equation}
where $\tilde{a} \neq \tilde{b}$.   
Subtracting side by side \eqref{eq23} by \eqref{eq22} we obtain 
\begin{equation}\label{eq24}
    (\tilde{a} - \tilde{b}) y_n^{\Tilde{h}} + \hot =    O(Y_n^{h_0 + \delta - 1})
\end{equation}
This is impossible since $|y_n|\sim |Y_n|$ and $\tilde{h} <h_0 + \delta -1$. Therefore, we can assume that both $\tilde{h}$ and $\tilde{h}'$ are strictly less than $h_0 + \delta -1$. Then, equations in \ref{eq19} can be written as: 

\begin{equation}\label{eq22(1)}
    A + \tilde{a} y_n^{\Tilde{h}} + \hot  = \frac{a'_0}{a_0}(B+ \tilde{a}' Y_n^{\Tilde{h}'}) + \hot 
\end{equation}
and 
\begin{equation}\label{eq23(1)}
    A + \tilde{b} y_n^{\Tilde{h}} + \hot  = \frac{a'_0}{a_0}(B+ \tilde{b}' Y_n^{\Tilde{h}'} ) + \hot 
\end{equation}
where $\tilde{a} \neq \tilde{b}$ and  $\tilde{a}' \neq \tilde{b}'$. 
Substracting side-by-side the above equations we get 
\begin{equation}\label{eq24}
    (\tilde{a} - \tilde{b}) y_n^{\Tilde{h}} + \hot =   \frac{a'_0}{a_0} (\tilde{a}' - \tilde{b}') Y_n^{\Tilde{h}'} + \hot 
\end{equation}
Since $|y_n| \sim |Y_n|$, it follows that  $\tilde{h} = \tilde{h}'$. The lemma is proved. 
\end{proof}

Now, suppose that $\tilde{h}< h_0 +\delta -1$. By Lemma \ref{lem1}, we have $\tilde{h}' = \tilde{h}$. Then, Equation \eqref{eq24} becomes: 

\begin{equation}\label{eq24(1)}
    (\tilde{a} - \tilde{b}) y_n^{\Tilde{h}} + \hot =   \frac{a'_0}{a_0} (\tilde{a}' - \tilde{b}') Y_n^{\Tilde{h}} + \hot 
\end{equation}
It follows that 
\begin{equation}\label{eq25}
    \frac{\Tilde{a}' - \Tilde{b}'}{\Tilde{a} - \Tilde{b}} = \frac{a_0}{a'_0}\lim_{n\to \infty}\left( \frac{y_n}{Y_n}\right)^{\Tilde{h}} = c^{\tilde{h}-h_0}
\end{equation}
where $c = \lim_{n \to  \infty} \frac{y_n}{Y_n}$ as in \eqref{eq18.2}.

We associate the pair of polar arcs $(\alpha, \beta)$ with the following numerical values:  

\begin{equation}\label{eq26}
m(\alpha, \beta) = \tilde{h}
\end{equation}

and  

\begin{equation}\label{eq27}
\nu(\alpha, \beta) = 
    \begin{cases}
        \tilde{a} - \tilde{b}, & \text{if } \tilde{h} < h_0 + \delta -1, \\
        0, & \text{otherwise}.
    \end{cases}
\end{equation}

Here, $\tilde{a}$ and $\tilde{b}$ are the coefficients of $y^{\tilde{h}}$ in the  expansions of $f(\alpha(y), y)/a_0$ and $f(\beta(y), y)/b_0$ respectively (see the left-hand sides of \eqref{eq19}).  

Note that $m(\alpha, \beta) = m(\beta, \alpha)$ and $\nu(\alpha, \beta) = - \nu(\beta, \alpha)$. These functions define a new bi-Lipschitz invariant, summarized as follows.

 Let $f: (\bb C^2, 0) \to (\bb C, 0)$ be an analytic function germ that is mini-regular in $x$ and has no multiple root. For a polar arc $\gamma: x = \gamma(y)$ of $f$, we associate to $\gamma$ to numbers $a_0(\gamma)$ and $h_0(\gamma)$ given by the expansion
$$f(\alpha(y), y) = a_0(\gamma) y^{h_0(\gamma)} + \hot$$
where $a_0(\gamma)\neq 0$.

For a line $L \subset \Sigma_f$, we denote by  $\Gamma(L)$ the set of all polar arc whose image tangent to $L$ and by $\Lambda(L)$ the set of all pairs $(\alpha, \beta)$ of polar arcs of $f$ satisfying the following conditions: 
\begin{enumerate}
    \item $\alpha_*$ and $\beta_*$ are tangent to $L$ at the origin
    \item $\al {GC}(\alpha_*) \neq \al{GC}(\beta_*)$ 
    \item $h_0(\alpha) = h_0(\beta)$
    \item $m(\alpha, \beta) < h_0(\alpha) + \ord_0(\alpha, \beta) -1$
\end{enumerate}  
where $m(\alpha, \beta)$ is defined as in \eqref{eq26}. 
Set
$$\Delta(L) = \{a_0(\gamma)y^{h_0(\gamma)}; (h_{0}(\alpha), \nu(\alpha, \beta) y^{m(\alpha, \beta)}): \gamma\in \Gamma(L),  (\alpha, \beta) \in \Lambda(L)\}/ \bb C^*$$
where  $\nu(\alpha, \beta)$ is defined as in \eqref{eq27} and the action of  $\bb C^*$ is given by: 
\begin{align*}
    \{a_{01} y^{h_{01}}, \ldots, a_{0k}y^{h_{0k}}; (l_{01}, \nu_1 y^{m_1}), \ldots, (l_{0n}, \nu_n y^{m_{n}})\} \sim \\
    \{ a_{01} c^{h_{01}} y^{h_{01}}, \ldots, a_{0k} c^{h_{0k}} y^{h_{0k}}; (l_{01}, \nu_1  c^{m_{1}-l_{01}} y^{m_{1}}), \ldots, (l_{0n},\nu_n c^{m_n-l_{0n}} y^{m_{n}})\}.
\end{align*} 

We define $\Inv^2 (f)$ as the set of all $\Delta(L)$ for $L\in \Sigma_f$. From \eqref{eq18.2} and \eqref{eq25} we obtain 
\begin{thm}
    Let $f, g: (\bb C^2, 0) \to (\bb C, 0)$  be analytic germs that are mini-regular in $x$ and have no multiple roots. If $f$ and $g$ are bi-Lipschitz equivalent then $\Inv^2(f) = \Inv^2(g)$.  
\end{thm}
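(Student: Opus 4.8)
The plan is to upgrade the bi-Lipschitz germ $\varphi$ (with $f=g\circ\varphi$) to a correspondence between the tangent lines and polar arcs of $f$ and those of $g$, and then to observe that the computations \eqref{eq18.2} and \eqref{eq25}, together with Lemma \ref{lem1}, say exactly that this correspondence carries the tuple defining $\Delta(L)$ to the tuple defining the matching $\Delta(L')$ along the $\bb C^*$-action, the acting scalar being the Henry--Parusi\'nski constant $c=c(L)$.

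First I would build the correspondence. By Theorem \ref{thm_pt}, every tangential polar arc $\alpha$ of $f$ (i.e.\ with $d_{\gr}(\alpha)>1$) determines a polar arc $\alpha'$ of $g$ with $d_{\gr}(\alpha')=d_{\gr}(\alpha)$ and $\varphi(\alpha_*)\subset\Horn^{(d)}(\alpha'_*,\rho,\epsilon)$; moreover, by part (2) of Theorem \ref{thm_pt} and the minimality of gradient canyons (\cite[Theorem 2.4]{PT}), any two admissible choices of $\alpha'$ share the same gradient canyon, inside which all the data used to form $\Delta$ (the exponent $h_0$, the leading coefficient $a_0$, and, for pairs, $m$ and $\nu$, which only involve the truncations in \eqref{equ12.1}--\eqref{equ12.2}) is constant, so $\alpha\mapsto\al{GC}(\alpha'_*)$ is unambiguous. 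Using the relation $\ord_0(\alpha'_*,\beta'_*)=\ord_0(\alpha_*,\beta_*)$ (item (ii) of the discussion before the theorem), two arcs $\alpha_*,\beta_*$ tangent to a common line $L\subset\Sigma_f$ are sent to arcs $\alpha'_*,\beta'_*$ tangent to a common line $L'\subset\Sigma_g$, so $L\mapsto L'$ is well defined; applying the construction also to $\varphi^{-1}$ and again invoking minimality of canyons shows $L\leftrightarrow L'$ is a bijection between the lines of $\Sigma_f$ and those of $\Sigma_g$. Finally, since for polar arcs tangent to $L$ the condition $\al{GC}(\alpha_*)\ne\al{GC}(\beta_*)$ is equivalent to $\ord_0(\alpha_*,\beta_*)<\min\{d_{\gr}(\alpha),d_{\gr}(\beta)\}$ (using minimality of the canyons), this inequality --- hence condition (2) in the definition of $\Lambda(L)$ --- is preserved.

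Next I would transfer the numerical data. Fix $L$ and let $c=c(L)\in\bb C^*$ be the limit $\lim_n y_n/Y_n$ of \eqref{eq18.2}; as recorded after that equation, $c$ is the Henry--Parusi\'nski constant and depends only on $L$. For a generator $\gamma\in\Gamma(L)$, the argument yielding \eqref{eq18} and \eqref{eq18.2} gives $h_0(\gamma')=h_0(\gamma)$ and $a_0(\gamma')=c^{h_0(\gamma)}a_0(\gamma)$, so the first block of entries of the tuple for $\Delta(L)$ is carried to that of $\Delta(L')$ by the first line of the $\bb C^*$-action. For a pair $(\alpha,\beta)\in\Lambda(L)$, the image pair $(\alpha',\beta')$ satisfies conditions (1)--(3) of $\Lambda(L')$ by the previous paragraph and by $h_0(\alpha')=h_0(\alpha)=h_0(\beta)=h_0(\beta')$; condition (4) and the identity $m(\alpha',\beta')=m(\alpha,\beta)$ are precisely Lemma \ref{lem1} (which gives $\tilde h=\tilde h'$), once one knows $\ord_0(\alpha,\beta)=\ord_0(\alpha',\beta')$. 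Writing $m=m(\alpha,\beta)$ and $l_0=h_0(\alpha)$, relation \eqref{eq25} then reads $\nu(\alpha',\beta')=c^{\,m-l_0}\nu(\alpha,\beta)$, which is the second line of the $\bb C^*$-action. Hence the representative tuples of $\Delta(L)$ and $\Delta(L')$ lie in one $\bb C^*$-orbit, so $\Delta(L)=\Delta(L')$; combined with the bijection $L\leftrightarrow L'$ this gives $\Inv^2(f)=\Inv^2(g)$.

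The main obstacle is the first step: since $\varphi$ is only bi-Lipschitz and not analytic, it is not evident a priori that any of the discrete invariants --- the gradient degrees, the partition of polar arcs into gradient canyons, the contact orders of (starred and unstarred) arcs, the exponents $h_0$ --- are preserved, and the whole argument rests on Theorem \ref{thm_pt} and the derivation of items (i)--(iii) for this. The one genuinely new bookkeeping point is the equality of the unstarred contact orders $\ord_0(\alpha,\beta)=\ord_0(\alpha',\beta')$, needed so that condition (4) of $\Lambda$ transfers correctly; I would obtain it from the bi-Lipschitz estimate $|\alpha(y)-\beta(y)|\sim|\varphi(\alpha(y),y)-\varphi(\beta(y),y)|$ together with the horn-coordinate expressions \eqref{eq13}--\eqref{eq14}, the strict inequality $\ord_0(\alpha_*,\beta_*)<\min\{d,d'\}$, and the symmetric estimate applied to $\varphi^{-1}$.
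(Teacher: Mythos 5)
Your proposal is correct and follows essentially the same route as the paper: the theorem is obtained exactly by combining Theorem \ref{thm_pt} (items (i)--(iii)) with the computations leading to \eqref{eq18.2}, Lemma \ref{lem1}, and \eqref{eq25}, which show that the data $(h_0,a_0,m,\nu)$ attached to a tangent line $L$ is carried to that of the corresponding line for $g$ by the $\bb C^*$-action with the Henry--Parusi\'nski constant $c$. The extra bookkeeping you supply (well-definedness of the canyon/line correspondence and preservation of the unstarred contact order) is a reasonable elaboration of points the paper leaves implicit rather than a different argument.
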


\section{Examples}
Consider an analytic family of germs $f_t(x, y): (\mathbb{C}^2, 0) \to (\mathbb{C}, 0)$,  $t \in U$, where $U$ is an open set in $\mathbb{C}^k$. We say that $f_{t_1}$ and $f_{t_2}$ are not \textit{bi-Lipschitz equivalent for $t_1, t_2$ generic} if  there exists a dense subset $U'$ of $U$ such that for every $t_1, t_2$ in $U'$ and $t_1 \neq t_2$, $f_{t_1}$ is not bi-Lipschitz equivalent to $f_{t_2}$.
 
\begin{example}\label{EX1}
    Consider the family of function germs $f(x,y) = f_t (x,y) = x^3 - 3t^2 x y^{10} + y^{12}$. Then, $f_{t_1}$ and $f_{t_2}$ are not bi-Lipschitz equivalent for $t_1, t_2$ generic.
\end{example}

\begin{proof}
Consider $f_t$ with $t\neq 0$. We have  $$\frac{\partial f_t }{\partial x}(x,y) = 3x^2 -3 t^2 y^{10}$$ and $$\frac{\partial f_t}{\partial y} = -30t^2 x y^9 + 12 y^{11}.$$ 
There are two polar arcs $\alpha: x = ty^5$, $\beta: x = -ty^5$. 
And, 
$$ f_t( \alpha(y), y) = y^{12} - 2 t^3 y^{15}$$
and 
$$f_t( \beta(y), y) = y^{12} + 2 t^3 y^{15}.$$

It is easy to check that $d_{\gr}(\alpha) = d_{\gr}(\beta) = 11/2$, $\ord_0(\alpha, \beta) = 5 <11/2$. This implies that $\alpha_{*}$ and $\beta_{*}$ are in different gradient canyons of $f_{t}$. 
In addition, $h_0(\alpha) = h_0(\beta) = 12$ and $m(\alpha, \beta) = 15 < h_0(\alpha) + \ord_0(\alpha, \beta) - 1 = 16$; $\nu(\alpha, \beta) = -4t^3$ and $\nu(\beta, \alpha) = 4t^3$.
Then, 
$$\Inv^2 (f_t) = \{ y^{12}, y^{12}; (12, -4t^3y^{15}), (12, 4t^3y^{15})\}/\bb C^*.$$
It follows that if $f_{t_1}$ and $f_{t_2}$ are bi-Lipschitz equivalent then  there is a constant $c \in \bb C$ such that 
$$
\begin{cases}
    c^{12}  = 1\\
    c^{3} \in \left\{ \frac{t_2^3}{t_1^3},-\frac{t_2^3}{t_1^3}\right\} 
\end{cases}
$$
For $t_1, t_2$ generic, this is obviously impossible. 
\end{proof}

\begin{example}\label{EX2} Let $f(x,y) =  f_{b,c}(x, y) = x^3 + bx^2y^3 + y^9 + cxy^7$ where $b, c\in \bb C$ are parameters. Then,  $f_{b_1, c_1}$ are not bi-Lipschitz equivalent to $f_{b_2, c_2}$ for $(b_1, c_1)$ and $(b_2, c_2)$ generic. Consequently, $f_{b,c}$ has Lipschitz modality at least $2$.
\end{example}

\begin{proof}

We have 
    $$\partial f/ \partial x = 3x^2 + 2bxy^3 + cy^7,$$  
    $$ \partial f/ \partial y = 3bx^2y^2 + 9 y^8 + 7 cxy^6.$$
Consider the set of all $(b,c) \in \bb C^2$ satisfying $b\neq 0$ and $\frac{4}{27}b^3+ 1\neq 0$ and $c\neq 0$. Using Puiseux's method we find that there are two polar arcs: 
    $$\alpha: x =  -\frac{2b}{3} y^3 + \frac{c}{2b}y^4 + o(y^4) \text{ and } \beta: x = -\frac{c}{2b} y^4 + o(y^4)$$
Computation shows
$$
    f (\alpha(y), y) =\left( \frac{4}{27} b^3 + 1\right) y^9 - \frac{2bc}{3}y^{10} + O(y^{11})
 $$

$$f (\beta(y), y) = y^9    + O(y^{11}),$$

$$ h_0(\alpha) = h_0(\beta) = 9,$$

$$d_{\gr}(\alpha)  =  d_{\gr}(\beta)  = 5,$$

$$ \ord_0(\alpha, \beta) = 3,$$

$$m(\alpha, \beta) = 10 < h_0(\alpha) + \ord_0(\alpha, \beta)  - 1  = 11,$$

$$\nu(\alpha, \beta) = \frac{-2bc}{3(\frac{4}{27}b^3+ 1)} = \frac{-18bc}{4b^3+27}.$$

Then, we have 
$$ \Inv^2(f_{b,c}) = \{ (\frac{4}{27}b^3+ 1)y^9, y^9; (9, \frac{-18bc}{4b^3+27} y^{10}), (9, \frac{18bc}{4b^3+27}) y^{10}\}/\bb C^*.$$

This implies that if $f_{b_1, c_1}$ and $f_{b_2, c_2}$ are bi-Lipschitz equivalent then there is a constant $u \in \bb C^*$ such that 
$$ u^9 \in  \bigg\{ \frac{\frac{4}{27}b_2^3+1}{\frac{4}{27}b_1^3+1},  \frac{1}{\frac{4}{27}b_1^3+1} \bigg\}
$$
and 
$$u \in \bigg\{\frac{b_2c_2 (4b_1^3+27)}{b_1c_1 (4b_2^3+27)}, \frac{-b_2c_2 (4b_1^3+27)}{b_1c_1 (4b_2^3+27)}\bigg\}.$$
This is obviously impossible for $(b_1, c_1)$ and $(b_2, c_2)$ generic. This ends the proof.
\end{proof}

\end{document}